\documentclass{amsart}
\usepackage{amssymb}
\usepackage{graphicx}
\usepackage{fullpage}
\vfuzz2pt 
\hfuzz2pt 
\newtheorem{thm}{Theorem}
\newtheorem{cor}[thm]{Corollary}
\newtheorem{lem}[thm]{Lemma}

\theoremstyle{definition}

\theoremstyle{remark}


\newcommand{\Gal}{\operatorname{Gal}}
\newcommand{\chr}{\operatorname{char}}

\begin{document}

\title[A note on Amitsur's noncrossed product theorem]{A note on Amitsur's noncrossed product theorem}%
\author{Mehran Motiee}%
\address{Faculty of Basic Sciences, Babol Noshirvani University of Technology, Babol, Iran}%
\email{motiee@nit.ac.ir}%

\subjclass[2010]{11R52, 16S35}%
\keywords{Division algebra, Crossed product, Fields of Laurent series}%

\begin{abstract}
In this note we give a short and elementary proof for a part of Amitsur's noncrossed product theorem.
Our approach does not rely on well-known results of valuation theory. Instead, we employ some
preliminary properties of the unit groups of formal iterated Laurent series division rings.
\end{abstract}
\maketitle
Let $D$ be a finite dimensional division algebra over its center $F$. Recall that
$D$ is called a crossed product if it contains a maximal subfield which is Galois over $F$.
For a finite group $G$, $D$ is said to be a $G$-crossed product if it contains a maximal Galois subfield
$K$ with $\Gal(K/F)\cong G$.
In the other direction, $D$ is said to be a noncrossed product if it does not contain any
maximal subfield that is a Galois extension of $F$. The problem of the existence of a
noncrossed product division algebra was open for many years. The first counterexamples
were given by Amitsur \cite{Am72} in 1972. Other examples of noncrossed products, based on different techniques,
were provided by other authors. For a survey of different kinds of noncrossed products see \cite[Sec. 9.4]{WT15}.
The aim of this note is to give a short and elementary proof for a part of Amitsur's counterexamples. Before
stating our proofs, we give a short outline for Amitsur's method. His examples of noncrossed
products are certain universal
division algebras which are defined as follows: Let $k$ be a ground field and
let $X=\{ x_{ij}^{\left( r\right) }| 1\leq i,j\leq n,r\geq 2\}$ be a set of
independent commuting variables. Let $k[X]$ be the integral domain of polynomials
in all $x_{ij}^{\left( r\right) }$ with coefficients in $k$ and
$k(X)$ the fraction field of $k[X]$. For each $r$, the standard generic
$n$ by $n$ matrices over $k$ are defined by
$\xi ^{\left( r\right) }=\left[ x_{ij}^{\left( r\right) }\right] \in M_{n}\left( F\left[ X\right] \right) \subseteq M_{n}\left( F\left( X\right) \right) $.
The generic matrix algebra over $k$ of degree $n$, by definition, is the
$F$-subalgebra of $M_{n}\left( F\left( X\right) \right)$ that is generated by
$\{  \xi ^{\left( k\right) }| r < n\} $. We denote this algebra by $B_n$.
It is well-known that $B_n$ is a (noncommutative) domain \cite[Prop. 20.5]{Pi82}, so its center is an
integral domain. We define the universal division algebra over $k$ of degree $n$ by
$UD(k,n)=B_n\otimes_{R}L_n$ where $R$ is the center of
$B_n$, and $L_n$ is the fraction field of $R$. Using the theory of PI-rings, one can observe that
$UD(k,n)$ is a division algebra of degree $n$ (cf. \cite[Prop. 20.8]{Pi82}). It is worth
noticing that, for whom who are not familiar with the theory of PI-rings,
in \cite[Chap. 20]{Pi82} a coherent and enough collection of prerequisites for realizing the above proofs is provided. The
main result Amitsur proved in order to show that certain $UD(k,n)$ are noncrossed products is the following:
\begin{thm}\label{t1}
	If $UD(k,n)$ is a $G$-crossed product, then every division algebra of degree $n$
	whose center contains a subfield isomorphic to $k$ is also a $G$-crossed product.
\end{thm}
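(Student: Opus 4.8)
The plan is to prove Theorem~\ref{t1} by a \emph{specialization} argument, exploiting the genericity of $UD(k,n)$ among degree-$n$ division algebras over fields containing $k$. Recall that after inverting the nonzero elements of its center $R$ one has $UD(k,n)\subseteq M_n(k(X))$ with center $L_n\subseteq k(X)$; in particular every element of $UD(k,n)$ is a matrix whose entries are rational functions in finitely many of the $x_{ij}^{(r)}$. Assume $UD(k,n)$ is a $G$-crossed product, so it contains a maximal subfield $K=L_n(\theta)$ with $\Gal(K/L_n)\cong G$. By Skolem--Noether each $\sigma\in G$ is conjugation by a unit $u_\sigma\in UD(k,n)^\times$, and these obey $u_\sigma u_\tau=c(\sigma,\tau)u_{\sigma\tau}$ for a cocycle $c\colon G\times G\to K^\times$, with $UD(k,n)=\bigoplus_{\sigma\in G}Ku_\sigma$. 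The first step is to record this data concretely: $\theta$, the $u_\sigma$ and the values $c(\sigma,\tau)$ are finitely many matrices over $k(X)$, and the assertions ``$K$ is a maximal subfield, $K/L_n$ is Galois with group $G$, and $UD(k,n)=\bigoplus_\sigma Ku_\sigma$'' package into a finite list of polynomial equalities and inequalities among their entries (the minimal polynomial of $\theta$ has degree $n$ with $n$ distinct roots lying in $K$, the $u_\sigma$ are invertible and multiply via $c$, and so on).

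Next I would transport this data into an arbitrary degree-$n$ division algebra $A$ with center $Z\supseteq k$. Fixing generators $a_1,\dots,a_m$ of $A$ over $Z$ (including generators of $Z$ over $k$) and a splitting field $E\supseteq Z$, I view the $a_r$ as matrices $\hat a_r\in M_n(E)$. The universal property of the generic matrices yields, from the assignment $\xi^{(r)}\mapsto\hat a_r$, a surjective $k$-algebra homomorphism $B_n\to A$; localizing at the central elements outside its kernel produces a place of $UD(k,n)$ with residue algebra $A$ and residue field $Z$ of $L_n$. Since a place preserves the polynomial identities listed above, the reductions $\bar\theta,\bar u_\sigma,\overline{c(\sigma,\tau)}$ would assemble into a crossed-product presentation inside $A$, provided the finitely many denominators from $R$ occurring in the crossed-product data avoid the kernel.

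The delicate point---and the one I expect to be the main obstacle---is that a crude reduction may \emph{degenerate}: the reduced minimal polynomial of $\bar\theta$ could acquire repeated roots, $Z(\bar\theta)$ could fail to be maximal, or, most seriously, the Galois group could collapse to a proper subquotient of $G$. This is exactly where the standard treatment invokes valuation theory (an unramified, residually tame place of good reduction). To avoid that, I would factor the place through an iterated formal Laurent series field, embedding $L_n$ into $Z((t_1))\cdots((t_s))$ so that composing with $t_i\mapsto 0$ recovers the place, and correspondingly lifting the reduction to an embedding of the relevant local part of $UD(k,n)$ into the iterated Laurent series division ring $A((t_1))\cdots((t_s))$. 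In such a division ring the maximal subfields, their automorphisms, and the validity of the crossed-product relations are governed by the structure of the unit group, and I would use the preliminary properties of these unit groups---in place of valuation theory---to certify that the image of $\theta$ generates a separable maximal subfield whose Galois group is exactly $G$.

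Finally, reducing modulo the $t_i$ transports this $G$-Galois maximal subfield down to $A$, once again invoking the unit-group properties to see that both the degree $n$ and the full group $G$ are preserved under $t_i\mapsto 0$; hence $A$ is itself a $G$-crossed product. The crux of the whole argument is thus the rigidity of the Galois group under reduction, which is precisely what the elementary unit-group analysis of iterated Laurent series division rings is meant to supply.
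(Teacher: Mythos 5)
First, a framing point: the paper itself does not prove Theorem~\ref{t1}; it states it and cites \cite[p.~417]{Pi82} for the proof. So your proposal has to be measured against the standard PI-theoretic argument of Amitsur/Pierce, and against that standard it has a genuine structural gap: you specialize in the wrong direction. You fix generators of $A$, invoke relative freeness of $B_n$ to get a surjection $B_n\to A$, and then hope that the crossed-product data $\theta,u_\sigma,c(\sigma,\tau)$ of $UD(k,n)$ is defined at, and stays non-degenerate under, this \emph{one} specialization. Nothing guarantees either: the central denominators of that data may lie in the kernel, and with a single fixed place you have no freedom to move off the bad locus. (Worse, the surjection need not exist at all: $B_n$ is countably generated over $k$, while $A$ --- e.g.\ the real quaternions over $\mathbb{R}\supseteq\mathbb{Q}$ --- can have uncountable dimension over $k$; surjectivity is both unattainable in general and the wrong demand.) The standard proof makes the opposite move first: since $A$ satisfies exactly the identities of $n\times n$ matrices, the \emph{generic elements} $\sum_{i,j}x_{ij}^{(r)}d_{ij}$ (with $\{d_{ij}\}$ a $Z$-basis of $A$) generate a copy of $B_n$ inside $A\otimes_Z Z(X)$, giving an embedding $UD(k,n)\hookrightarrow A\otimes_Z Z(X)$ and in fact $A\otimes_Z Z(X)\cong UD(k,n)\otimes_{L_n}Z(X)$. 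The $G$-crossed-product structure passes up under this scalar extension ($K\otimes_{L_n}Z(X)$ is a domain inside a division algebra, hence a field of degree $n$ over $Z(X)$, and degree count forces its Galois group to be all of $G$). Only then does one specialize --- and now only the variables $X$ are sent into $Z$, with $A$ fixed pointwise. Since $Z$ is infinite (otherwise $n=1$ by Wedderburn) and the bad locus (denominators, vanishing of differences of the conjugates $\sigma(\theta)$) is a finite union of proper closed conditions, a good specialization point exists; distinctness of the specialized conjugates $\bar u_\sigma\bar\theta\bar u_\sigma^{-1}$ then forces $Z(\bar\theta)$ to be a maximal subfield and $G\to\Gal(Z(\bar\theta)/Z)$ to be an isomorphism. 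This Zariski-density freedom is precisely what your single surjection lacks, and it is the missing idea in your write-up.

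The patch you propose for the degeneration problem does not close the gap. Factoring the place through $Z(\!(t_1)\!)\cdots(\!(t_s)\!)$ so that $t_i\mapsto 0$ recovers it requires a coefficient field (a copy of $Z$ inside the completed valuation ring), which is a Cohen/Hensel-type statement --- exactly the valuation-theoretic input you set out to avoid; and the further claim that a local piece of $UD(k,n)$ embeds into $A(\!(t_1)\!)\cdots(\!(t_s)\!)$ compatibly is a good-reduction assertion essentially equivalent to what is being proved, so the argument is circular at its crux. Moreover, the unit-group results of this paper cannot ``certify that the Galois group is exactly $G$'' in that setting: the commutator bound \eqref{e2}, the $p$-divisibility established in items \textbf{I} and \textbf{II}, and Corollary~\ref{lem3} concern the specific symbol-like algebras $\Delta_{2r}(k;n_1,\dots,n_r)$ over base fields containing appropriate roots of unity, and they are used in the paper for the \emph{other} half of Amitsur's argument (constraining which groups $G$ can occur for those particular algebras), not for Theorem~\ref{t1}. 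They say nothing about maximal subfields of $A(\!(t_1)\!)\cdots(\!(t_s)\!)$ for an arbitrary division algebra $A$.
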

A proof of Theorem \ref{t1} in its full generality\footnote{In his original paper Amitsur only considers the case $k=\mathbb{Q}$, the field of rational numbers.}
can be found in \cite[p. 417]{Pi82}. Having Theorem \ref{t1} in hand, to prove that $UD(k,n)$ is not a crossed
product it suffices to produce different examples of division algebras over $k$ whose
maximal subfields do not have a common Galois group. The aim of this note is to present a simple
proof with minimal computations for this step in the case $\chr k\neq p$ and $n=p^m$ for some prime $p$ and some $m\geq 3$.
It should be noted here that the most general case (regardless of the algebraic properties of the base field $k$) in which it has been shown that $UD(k,n)$
is not a crossed product is the case $p^3|n$ and $\operatorname{char} k\nmid n$ (see \cite[p. 477]{WT15}).
Unlike previous proofs, our approach does not rely on known results of valuation theory (specifically Hensel's lemma).
We only use the definition of valuations in order to simplify
our presentation. In this sense, our method can be considered elementary.

In what follows, $p$ is always a prime number, $k$ is a field with $\operatorname{char} k\neq p$ and $\omega_n$ denotes a primitive 
$n$-th root of unity. 

In the first step, we recall some well-known facts about the multiplicative groups of Laurent series fields and Laurent 
series division rings.

\textbf{I.}  Let $k(\!(x)\!)$ be the field of Laurent series
over $k$. Let $f(x)=1+a_{1}x+a_2x^2+\dots \in k(\!(x)\!)$.
We show that $f(x)$ has a $p$-th root in $k(\!(x)\!)$.
For this, set $g\left( x\right) =1+b_{1}x+b_2x^2+\dots $
in which $b_n$'s are not known. We wish to find the coefficients of $g(x)$ so that $g(x)^p=f(x)$.
A direct computation shows that
$$
g\left( x\right) ^{p}=1+ pb_{1} x+\left( pb_{2}+\phi _{2}\left( b_{1}\right) \right) x^{2}+\dots
+\left( pb_{n}+\phi _{n}\left( b_{1},\ldots ,b_{n-1}\right) \right) x^{n}+\dots
$$
where each $\phi _{n}\left( b_{1},\ldots ,b_{n-1}\right)$ is a function in $b_1,\dots,b_{n-1}$.
Note that, for our purpose, we have not required the explicit values of $\phi_n$'s.
Comparing coefficients of $f(x)$ and $g(x)^p$ gives

\begin{equation}\label{e}
	a_{1}= p  b_{1},\ a_{2}= p  b_{2}+\phi_{2}\left( b_{1}\right),\ \dots,\ 
	a_{n}= p b_{n}+\phi_{n}\left(b_{1}, \dots, b_{n-1}\right),\ \dots
\end{equation}

Now, since $p$ is invertible in $k$, from \eqref{e},
we may calculate $b_n$'s successively as follows
$$
b_{1}=p^{-1}a_{1},\ b_{2}=p^{-1}\left( a_{2}-\phi _{2}\left( b_{1}\right) \right),\ \dots,\ 
b_{n}=p^{-1}\left( a_{n}-\phi _{n}\left( b_{1},\ldots ,b_{n-1}\right) \right),\ \dots
$$


\textbf{II.} Throughout this item $\omega_p\in k$ and $k$ is $p$-divisible, i.e., $k^*=k^{*p}$ (so $\omega_{p^n}\in k$ for all $n\geq 1$). 
Let $F_{0,0}=k$ and for each $r\geq 1$
let $F_{r,0}=k(\!(x_1,\dots,x_r)\!)$ be the field of iterated Laurent series in variables $x_1,\dots,x_r$ over $k$.
For $m\geq 1$ let $F_{r,m}=k(\!(x_1^{1/p^m},\dots,x_r^{1/p^m})\!)$. Note that
$F_{r,m}=F_{r,0}(x_1^{1/p^m},\dots,x_r^{1/p^m})$ and hence $F_{r,m}/F_{r,0}$ is a finite abelian Galois extension.
Set $F_r=\cup_{m\geq 0}F_{r,m}$. Our purpose is to show that $F_r$ is $p$-divisible. We now proceed by induction on $r$. 
The case $r=0$ is guaranteed by the assumption $k^*=k^{*p}$. Let $F_{r-1}$ be $p$-divisible for 
$r>0$. If $a\in F_r$ then $a$ lies in some $F_{r,m}$ and hence we can write it in the form 
$a=b_0x_r^{k/p^m}\left(1+b_1x_r^{1/p^m}+b_2x_r^{(1/p^m)2}+\dots\right)$, where $b_i\in F_{r-1}$ for all $i\geq 0$.
By induction hypothesis and \textbf{I}, both $b_0$ and $1+b_1x_r^{1/p^m}+b_2x_r^{(1/p^m)2}+\dots$ are $p$-th roots in $F_r$. Furthermore, $x_r^{k/p^m}=x_r^{\left(k/p^{m+1}\right)p}$ showing that $x_r^{k/p^m}$ is also 
a $p$-th root in $F_r$. So $a$ is a $p$-th
root in $F_r$ and the result follows.

\textbf{III.} 
Let $k$ be an arbitrary field and let $n_1,\dots,n_r$ be integers (not necessarily distinct) with $n_i\geq 2$ for all $i$.
let
$$
n=n_1\dots n_r\quad\text{and}\quad m=\operatorname{lcm}(n_1,\dots,n_r).
$$

Assume that $k$ contains a primitive $m$-th root of unity $\omega_m$, and let
$\omega_{n_i}=\omega_m^{m/n_i}$, for all $i=1,\dots,r$; so $\omega_{n_i}$ is a primitive
$n_i$-th root of unity.
Let $\Delta_{2r}(k; n_1,\dots,n_r)=k(\!(x_1,y_1,\dots,x_{r},y_{r})\!)$
be the division ring of the formal iterated Laurent series in variables
$x_1,y_1,\dots,x_{r},y_{r}$ with the multiplication relations:
$$
x_ia=ax_i\quad,\quad y_ia=ay_i\quad \text{for}\ a\in k
$$
$$
x_ix_j=x_jx_i\quad,\quad y_iy_j=y_jy_i
$$
$$
x_iy_j=y_jx_i\quad \text{for}\ i\neq j
$$
$$
x_iy_i=\omega_{n_i} y_ix_i.
$$
Recall that $\Delta_{2r}(k; n_1,\dots,n_r)$ is a division algebra of degree $n$ and center
$F=k(\!(x_1^{n_1},y_1^{n_1},\dots,x_{r}^{n_r},y_{r}^{n_r})\!)$  (cf. \cite[p. 100]{J75}).
From now on, for abbreviation, we write $\Delta$ instead of $\Delta_{2r}(k; n_1,\dots,n_r)$.
Recall that $\Delta$ has a natural valuation with the value group $\Gamma=\oplus_{i=1}^{2r}\mathbb{Z}$ given by
$$
v\left(\sum_{i_1,\dots,i_{2r}}a_{i_1,\dots,i_{2r}}x_1^{i_1}y_1^{i_i}\dots x_r^{i_{2r-1}}y_r^{i_{2r}}\right)=
\min\{(i_1,\dots,i_{2r})|a_{i_1,\dots,i_{2r}}\neq 0\}.
$$
To simplify our notations, for $f\in \Delta^*$ we write $f=a_{\alpha}x^{\alpha}(1+d)$ where
$\alpha=v(f)$, $a_{\alpha}\in k^*$ and $d\in M_{\Delta}$, where $M_{\Delta}=\{g\in\Delta^*|v(g)>0\}$.
So we have $\Delta^*=H(1+M_{\Delta})$ where
$H=\{ax^{\alpha}|a\in k^*, \alpha\in \Gamma\}$.
From the multiplication rule of
$\Delta$ it follows that $x^{\alpha}x^{\beta}=\omega_m^{e}x^{\alpha+\beta}$
for some integer $e$. This gives $H'=\langle\omega_m\rangle$, where $H'$ is the commutator group of $H$. Since
$1+M_{\Delta}$ is a normal subgroup of $\Delta^*$, we conclude that
\begin{equation}\label{e2}
	\Delta'\subseteq H'(1+M_{\Delta})=\langle\omega_m\rangle(1+M_{\Delta}).
\end{equation}

The following lemma is required to provide our proof. 
\begin{lem}\label{l1}
	Let $F$ be a field whose multiplicative group is
	$p$-divisible and let $\omega_p\in F$. Let $L$ be a subfield of $F$ and $K$ be
	a finite extension of $L$ contained in some algebraic closure of $F$. If $K/L$ is Galois and $[K:L]=p^m$ for some $m\geq 0$
	then $K\subseteq F$.
\end{lem}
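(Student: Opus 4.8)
The plan is to reduce the statement to the nonexistence of a cyclic extension of degree $p$ of $F$, which $p$-divisibility rules out via Kummer theory. The first and crucial step is to replace the base field $L$ by $F$: inside the given algebraic closure of $F$ I would form the compositum $N=KF$. Since $K/L$ is Galois (hence normal and separable) and $L\subseteq F$, the extension $N/F$ is again Galois, and the restriction map $\Gal(N/F)\to\Gal(K/L)$, $\sigma\mapsto\sigma|_K$, is injective, because an automorphism of $N=KF$ fixing $F$ is determined by its action on $K$. Consequently $[N:F]$ divides $[K:L]=p^m$, so $\Gal(N/F)$ is a finite $p$-group, and proving $K\subseteq F$ becomes equivalent to proving $N=F$. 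The whole point of passing to $F$ is that $F$, unlike $L$, is known to contain $\omega_p$, which is exactly what makes Kummer theory applicable.

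Next I would argue by contradiction. If $N\neq F$, then $P:=\Gal(N/F)$ is a nontrivial finite $p$-group, so $[P,P]\subsetneq P$ and $P/[P,P]$ is a nontrivial finite abelian $p$-group; pulling back an index-$p$ subgroup of the latter yields a normal subgroup $P_0\trianglelefteq P$ of index $p$. Its fixed field $M=N^{P_0}$ is then a Galois, necessarily cyclic, extension of $F$ of degree $p$. Since $\omega_p\in F$ forces $\chr F\neq p$, Kummer theory gives $M=F(a^{1/p})$ for some $a\in F^{*}$, and the nontriviality $M\neq F$ forces $a\notin F^{*p}$. This contradicts the hypothesis $F^{*}=F^{*p}$. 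Hence $N=F$, that is, $K\subseteq F$; the degenerate case $m=0$ is immediate since then $K=L\subseteq F$.

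I expect the only genuinely delicate point to be this first reduction, namely recognizing that one should work over $F$ rather than over $L$ so that the primitive $p$-th root of unity sits in the ground field; once Kummer theory is available over $F$, the descent through the $p$-group to a single degree-$p$ cyclic subextension and the final clash with $F^{*}=F^{*p}$ are routine. The remaining verifications — that $N/F$ is Galois, that the restriction map is injective, and that a nontrivial finite $p$-group has a normal subgroup of index $p$ — are standard and follow directly from $N=KF$ together with the normality and separability of $K/L$.
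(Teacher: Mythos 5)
Your proposal is correct and follows essentially the same route as the paper: both pass to the compositum $KF$, identify $\Gal(KF/F)$ with a subgroup of $\Gal(K/L)$ (hence a finite $p$-group), extract a normal subgroup of index $p$ whose fixed field would be a cyclic degree-$p$ extension of $F$, and rule this out by Kummer theory using $F^{*}=F^{*p}$ and $\omega_p\in F$. The only cosmetic difference is that you construct the index-$p$ normal subgroup via the abelianization, while the paper cites the standard fact about $p$-groups directly.
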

\begin{proof}
	We first observe that by Galois theory $FK/F$
	is Galois and $\Gal(FK/F)\cong \Gal(K/K\cap F)$.
	Since $\Gal (K/K\cap F)$ is a subgroup of $\Gal(K/L)$, we have
	$|\Gal (FK/F)|=p^r$ for some nonnegative integer $r$.
	If $r\neq 0$ then by the theory of $p$-groups
	$\Gal(FK/F)$ contains a normal subgroup $H$ such that $|H|=p^{r-1}$.
	Let $M$ be the fixed field of $H$. It follows from the
	fundamental theorem of Galois theory that $M/F$ is Galois with $[M:F]=p$.
	So $M/F$ is cyclic which is impossible, because by Kummer Theory conditions
	$F^*=F^{*p}$ and $\omega_p\in F $ imply that $F$ does not have
	a cyclic extension of degree $p$. So $r=0$. Therefore, $FK=F$ and hence $K\subseteq F$, as desired.
\end{proof}

\begin{cor}\label{lem3}
	Let $K$ be finite Galois extension of $F_{r,0}$ such that $[K:F_{r,0}]$ is a power of $p$. Then
	$\Gal(K/F_{r,0})$ is abelian.
\end{cor}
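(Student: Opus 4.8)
The plan is to reduce the statement to Lemma \ref{l1} together with the $p$-divisibility of $F_r$ established in \textbf{II}. Throughout I keep the standing hypotheses of \textbf{II}, namely $\omega_p\in k$ and $k^*=k^{*p}$, so that $F_r=\cup_{m\geq 0}F_{r,m}$ is $p$-divisible and contains $\omega_p$; these are exactly the two conditions needed to invoke the lemma.

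First I would apply Lemma \ref{l1} with $F=F_r$ and $L=F_{r,0}$. Here $F_{r,0}$ is a subfield of $F_r$, the extension $K/F_{r,0}$ is finite Galois of $p$-power degree by hypothesis, and, after choosing the algebraic closure of $F_r$ large enough, $K$ lies in that algebraic closure. Since $F_r^*$ is $p$-divisible and $\omega_p\in F_r$, the lemma then yields $K\subseteq F_r$.

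Next I would descend to a finite level of the tower. As $K/F_{r,0}$ is finite, $K$ is generated over $F_{r,0}$ by finitely many elements, each of which already lies in some $F_{r,m_i}$; taking $m=\max_i m_i$ gives $K\subseteq F_{r,m}$, so that $F_{r,0}\subseteq K\subseteq F_{r,m}$. Recalling from \textbf{II} that $F_{r,m}/F_{r,0}$ is abelian Galois, I see that $K$ is an intermediate field of an abelian Galois extension. Consequently $\Gal(F_{r,m}/K)$ is a subgroup of the abelian group $\Gal(F_{r,m}/F_{r,0})$, hence normal, and $\Gal(K/F_{r,0})\cong\Gal(F_{r,m}/F_{r,0})/\Gal(F_{r,m}/K)$ is a quotient of an abelian group, therefore abelian.

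The only substantive step is the application of Lemma \ref{l1}, which rests entirely on the $p$-divisibility of $F_r$ proved in \textbf{II}: this is what forces $K$ into the union $F_r$ in the first place. Once $K\subseteq F_r$ is secured, the passage to a finite level $F_{r,m}$ and the conclusion that $\Gal(K/F_{r,0})$ is abelian are routine Galois theory, and I do not anticipate a genuine obstacle there.
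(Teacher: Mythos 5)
Your proposal is correct and takes essentially the same route as the paper: invoke Lemma \ref{l1} with $F=F_r$ and $L=F_{r,0}$ (using the $p$-divisibility of $F_r$ from \textbf{II}) to get $K\subseteq F_r$, place $K$ inside some finite level $F_{r,m}$, and conclude that $\Gal(K/F_{r,0})$ is abelian because $F_{r,m}/F_{r,0}$ is abelian. You merely spell out two details the paper leaves implicit, namely that finiteness of $K/F_{r,0}$ lets you take $m$ as the maximum over the levels of finitely many generators, and that $\Gal(K/F_{r,0})$ is a quotient of the abelian group $\Gal(F_{r,m}/F_{r,0})$.
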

\begin{proof}
	Since $F_r$ is $p$-divisible, from Lemma \ref{l1} it follows that $K$ is a subfield of $F_r$. So $K$ lies in some $F_{r,m}$.
	Since $F_{r,m}$ is an abelian extension of $F_{r,0}$ we conclude that $K/F_{r,0}$ is abelian.
\end{proof}

\begin{thm}
	Let $k$ be an arbitrary field with $\chr k\neq p$.
	If $n\geq 3$ then $UD(k,p^n)$ is noncrossed product.
\end{thm}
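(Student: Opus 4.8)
The plan is to invoke Theorem~\ref{t1} and reduce the problem to producing two explicit division algebras of degree $p^n$ over some common field, whose maximal Galois subfields cannot share a single Galois group $G$. By Theorem~\ref{t1}, if $UD(k,p^n)$ were a $G$-crossed product, then every degree-$p^n$ division algebra whose center contains a copy of $k$ would be a $G$-crossed product for one fixed finite group $G$ of order $p^n$. So it suffices to exhibit two such algebras whose maximal subfields realize genuinely different sets of Galois groups, preventing any single $G$ from serving both.

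\textbf{Construction of the two algebras.} First I would build the algebras using the symbol-algebra machinery of item \textbf{III}. Passing to a field $k'\supseteq k$ that contains the needed roots of unity and is $p$-divisible (the role of item \textbf{II}), I would take on one hand $\Delta=\Delta_{2}(k';p^n)$, a single symbol algebra of degree $p^n$, and on the other hand a tensor-type algebra such as $\Delta_{2n}(k';p,\dots,p)$ built from $n$ symbols each of degree $p$, so that both have degree $p^n$ over their respective centers, each of which contains $k$. The first algebra, being a single symbol, has a maximal subfield that is cyclic of degree $p^n$, forcing $G$ to admit a cyclic quotient (in fact a cyclic maximal subfield) of order $p^n$. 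The second algebra is a tensor product of degree-$p$ symbols, so by the structure of its center and the Galois computation of item \textbf{III} (equation~\eqref{e2} together with Corollary~\ref{lem3}) every maximal subfield that is Galois over the center has abelian Galois group of exponent dividing some bounded power of $p$; crucially, for $n\geq 3$ it admits \emph{no} cyclic maximal subfield of degree $p^n$.

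\textbf{The incompatibility argument.} The heart of the proof is then to show these two constraints on $G$ are contradictory. The first algebra forces $G$ to be (or to contain as a maximal Galois subfield's group) cyclic of order $p^n$; the second forces every maximal Galois subfield group to be a noncyclic abelian $p$-group. A single group $G$ cannot simultaneously be cyclic of order $p^n$ and be realized only as a noncyclic abelian group, so no common $G$ exists. This is precisely where the hypothesis $n\geq 3$ enters: for small $n$ there is not enough room to separate the cyclic from the noncyclic abelian possibilities, but for $n\geq 3$ (equivalently $p^3\mid p^n$) the two families of maximal-subfield Galois groups are provably disjoint.

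\textbf{Main obstacle.} The hard part will be the Galois analysis of the maximal subfields of the second, tensor-type algebra: showing rigorously that \emph{no} maximal subfield is both Galois over the center and cyclic. This requires using the valuation structure of $\Delta$—specifically the description $\Delta^*=H(1+M_\Delta)$ and the containment \eqref{e2} $\Delta'\subseteq\langle\omega_m\rangle(1+M_\Delta)$—to pin down which abelian groups can arise, and then combining it with Corollary~\ref{lem3} to rule out cyclicity of the full degree-$p^n$ extension. Once the maximal-subfield group of each algebra is determined, the final contradiction is immediate, so essentially all the real work lies in this group-theoretic and valuation-theoretic bookkeeping for $n\geq 3$.
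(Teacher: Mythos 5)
Your choice of the two algebras is exactly the paper's ($D_1=\Delta_{2n}(k_{\operatorname{sep}};p,\dots,p)$ and $D_2=\Delta_{2}(k_{\operatorname{sep}};p^n)$), but the incompatibility you propose to derive from them is logically flawed on both sides. The claim that the single symbol algebra ``forces $G$ to be cyclic of order $p^n$'' is invalid: being a $G$-crossed product is an \emph{existential} statement, asserting only that \emph{some} maximal subfield is Galois over the center with group $G$. The presence of one cyclic maximal subfield (such as $P(x_1)$) puts no constraint at all on which other groups can occur. Indeed, for any $a+b=n$ the elements $x_1^{p^b}$ and $y_1^{p^a}$ of $\Delta_2(k_{\operatorname{sep}};p^n)$ commute (moving one past the other produces the factor $\omega_{p^n}^{p^{a+b}}=1$), so $P\left(x_1^{p^b},y_1^{p^a}\right)$ is a maximal subfield, Galois over $P$ by Kummer theory, with group $\mathbb{Z}_{p^a}\times\mathbb{Z}_{p^b}$. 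Thus $D_2$ is simultaneously a crossed product for every abelian group of order $p^n$ and rank at most $2$; the only constraint it genuinely imposes on $G$ is $\operatorname{rank}(G)\leq 2$, which is what the paper extracts via Corollary \ref{lem3} and the computation that $P^*/P^{*p^n}$ is generated by the images of $x_1^{p^n}$ and $y_1^{p^n}$.

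Correspondingly, the constraint you extract from the tensor-type algebra --- that it has no cyclic maximal subfield, so $G$ is noncyclic --- is too weak to finish: a noncyclic abelian group of order $p^n$ can perfectly well have rank $2$ (e.g.\ $\mathbb{Z}_{p^{n-1}}\times\mathbb{Z}_p$), so ``noncyclic'' does not clash with ``rank $\leq 2$''. Note also that in your cyclic-versus-noncyclic form the hypothesis $n\geq 3$ never actually does any work, and the same argument applied to $n=2$, $p=2$ would ``prove'' that $UD(k,4)$ is a noncrossed product, contradicting Albert's theorem that every degree $4$ division algebra is a crossed product. The missing idea --- and the real content of the paper's proof --- is the much stronger claim that \emph{every} Galois maximal subfield of $D_1$ has group exactly $\oplus_{j=1}^n\mathbb{Z}_p$, i.e.\ exponent $p$: if some $\tau\in G$ had order $p^2$, one passes to the centralizer $E=C_{D_1}(L)$ of the fixed field $L$ of $\tau$, and Hilbert's Satz 90 together with the Skolem--Noether theorem exhibit $\omega_{p^2}$ as a commutator in $E'\subseteq D_1'$, contradicting the containment \eqref{e2} $D_1'\subseteq\langle\omega_p\rangle(1+M_{D_1})$. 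This forces $\operatorname{rank}(G)=n$, which against $\operatorname{rank}(G)\leq 2$ gives the contradiction precisely when $n\geq 3$. Without this exponent-$p$ argument (which your proposal never identifies, gesturing only at ruling out cyclicity), the rank comparison collapses.
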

\begin{proof}
	On the contrary, suppose that $UD(k,p^n)$ is a $G$-crossed product.
	Consider the division algebra $D_1=\Delta_{2n}(k_{\operatorname{sep}};\underbrace{p,\dots, p}_n)$ where
	$k_{\operatorname{sep}}$ is a separable closure of $k$.
	By Theorem \ref{t1}, $D_1$ is a $G$-crossed product and hence it contains a maximal subfield
	$K$ with $\Gal(K/F)\cong G$. From Corollary \ref{lem3}, it follows that
	$G$ is abelian. We claim that $G\cong\oplus_{j=1}^n \mathbb{Z}_{p}$.
	To prove our claim, it suffices to show that each $\sigma\in G$ has order $p$. Let, there exists an element
	$\sigma\in G$ such that $O(\sigma)>p$. Since $G$ is abelian, we can choose a $\tau\in G$ with $O(\tau)=p^2$. Let
	$L$ be the fixed field of $\tau$. By Centralizer Theorem \cite[p. 42]{Dra83}, $E=C_{D_1}(L)$ is an $L$-central division algebra
	of degree $p^2$ and $K$ is a maximal subfield of $E$. Furthermore, we have $\Gal(K/L)=\langle \tau\rangle$.
	Now, we observe that $N_{K/L}(\omega_{p^2})=\left(\omega_{p^2}\right)^{p^2}=1$, where $N_{K/L}$ is the norm map from
	$K^*$ to $L^*$ (note that $\omega_{p^2}\in k_{\operatorname{sep}}\subseteq L$).
	So by Hilbert's satz 90 we can write $\omega_{p^2}=\tau(a)a^{-1}$ for some $a\in K^*$. On the other hand, by
	Skolem-Noether Theorem (\cite[p. 39]{Dra83}), there is a $t\in E^*$ such that $\tau(a)=tat^{-1}$. Hence
	$\omega_{p^2}=tat^{-1}a^{-1}\in E'\subseteq D_1'$, contrary to \eqref{e2} (recall that by \eqref{e2} we have 
	$D_1'\subseteq\langle\omega_p\rangle(1+M_{D_1})$).
	This is our claim. So $\operatorname{rank}(G)=n\geq 3$. Now, let
	$D_2=\Delta_{2}(k_{\operatorname{sep}}; p^n)$.
	Let $P=k_{\operatorname{sep}}(\!(x_1^{p^n},y_1^{p^n})\!)$ which is the center of $D_2$.
	Theorem \ref{t1} shows that $D_2$ is again a $G$-crossed product. Let $N$ be a maximal subfield of $D_2$ such
	that $\Gal(N/P)\cong G$. Since $G$ is abelian and $\omega_{p^n}\in k_{\operatorname{sep}}$, from Kummer Theory
	it follows that $G$ is isomorphic to a subgroup of $P^*/P^{*p^n}$. But,
	each $a\in P^*$ may be written in the form $a=ux_1^{rp^n}y_1^{sp^n}\left(1+v_1y_1^{p^n}+v_2y_1^{2p^n}+\dots\right)$ where 
	$u\in k_{\operatorname{sep}}^*$ and $v_i\in k_{\operatorname{sep}}(\!(x_1^{p^n})\!)$ for all $i\geq 1$.
	Because $u$ and $1+v_1y_1^{p^n}+v_2y_1^{2p^n}+\dots$ are a $p^n$-th root (see \textbf{I}), it follows that $P^*/P^{*p^n}$
	is generated by the images of $x_1^{p^n}$ and $y_1^{p^n}$.
	In particular, we have $\operatorname{rank}(G)\leq 2$. This contradiction shows that
	$UD(k,p^n)$ is not a crossed product.
\end{proof}


\bibliographystyle{plain}

\begin{thebibliography}{1}

\bibitem{Am72}
S.~A. Amitsur.
\newblock On central division algebras.
\newblock {\em Israel J. Math.}, 12:408--420, 1972.

\bibitem{Dra83}
P.~K. Draxl.
\newblock {\em Skew fields}, volume~81 of {\em London Mathematical Society
  Lecture Note Series}.
\newblock Cambridge University Press, Cambridge, 1983.

\bibitem{J75}
Nathan Jacobson.
\newblock {\em PI-algebras: An introduction}.
\newblock Lecture Notes in Mathematics 0441. Springer-Verlag, 1975.

\bibitem{WT15}
Adrian R.~Wadsworth Jean-Pierre~Tignol.
\newblock {\em Value Functions on Simple Algebras, and Associated Graded
  Rings}.
\newblock Springer Monographs in Mathematics. Springer International
  Publishing, 2015.

\bibitem{Pi82}
Richard~S. Pierce.
\newblock {\em Associative algebras}, volume~88 of {\em Graduate Texts in
  Mathematics}.
\newblock Springer-Verlag, New York, 1982.
\newblock Studies in the History of Modern Science, 9.

\end{thebibliography}

\end{document}